\newtheorem{theorem}{Theorem}
\newtheorem{corollary}[theorem]{Corollary}
\newcommand{\be}{\begin{equation}}
\newcommand{\ee}{\end{equation}}
\newcommand{\bea}{\begin{eqnarray}}
\newcommand{\eea}{\end{eqnarray}}
\begin{document}
\title{Some elementary consequences of Perelman's canonical neighborhood theorem}
\author{Bennett Chow}
\author{Peng Lu$^{1}$}
\maketitle


In this purely expository note,\footnotetext[1]{Addresses. Bennett Chow: Math.
Dept., UC San Diego; Peng Lu: Math. Dept., U of Oregon.} we recall a few known
direct consequences of Perelman's canonical neighborhood theorem for
$3$-dimensional Ricci flow and compactness theorem for $3$-dimensional
$\kappa$-solutions. These corollaries regard elementary properties of
$3$-dimensional singularity models and $\kappa$-solutions. Throughout this
note, convergence is in the pointed $C^{\infty}$ Cheeger--Gromov sense,
$\cong$ denotes diffeomorphic, $\operatorname{Rm}$ denotes the Riemann
curvature tensor, and $i\in\mathbb{N}$.

Given $\kappa>0$, a complete solution $\left(  \mathcal{M}^{n},\tilde
{g}(t)\right)  $, $t\in(-\infty,0]$, of the Ricci flow is called a $\kappa
$-solution if $\tilde{g}(t)$, $t\in(-\infty,0]$, is nonflat with uniformly
bounded nonnegative curvature operator $\operatorname{Rm}\geq0$ and $\kappa
$-noncollapsed at all scales.

\begin{corollary}
\label{thm any limit large curv exists}If $\left(  \mathcal{M}^{3},g\left(
t\right)  \right)  $, $t\in\lbrack0,T)$, $T<\infty$, is a singular solution to
the Ricci flow on a closed $3$-manifold, then for any $\left(  x_{i}%
,t_{i}\right)  \in\mathcal{M}\times\lbrack0,T)$ with scalar curvature
$R_{i}\doteqdot R\left(  x_{i},t_{i}\right)  \rightarrow\infty$, there exists
a subsequence of $\left(  \mathcal{M},\tilde{g}_{i}\left(  t\right)  ,\left(
x_{i},0\right)  \right)  $, where $\tilde{g}_{i}\left(  t\right)  \doteqdot
R_{i}g\left(  t_{i}+R_{i}^{-1}t\right)  $, converging to a $\kappa$-solution
$\left(  \mathcal{M}_{\infty}^{3},g_{\infty}\left(  t\right)  ,\left(
x_{\infty},0\right)  \right)  $. In particular, $\left\vert \operatorname{Rm}%
_{g_{\infty}\left(  t\right)  }\right\vert \leq C$ on $\mathcal{M}_{\infty
}\times(-\infty,0]$ for some $C<\infty$.
\end{corollary}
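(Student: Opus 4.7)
The strategy is to use Perelman's canonical neighborhood theorem (CNT) together with his no local collapsing theorem to establish uniform curvature and injectivity radius bounds on bounded parabolic neighborhoods of $(x_i,0)$ in the rescaled metrics $\tilde g_i$, then extract a smooth pointed Cheeger--Gromov limit via Hamilton's compactness theorem, and finally verify that this limit satisfies the defining properties of a $\kappa$-solution.

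By construction $R_{\tilde g_i(0)}(x_i,0)=1$. The CNT furnishes the scale-invariant derivative estimates $|\nabla R|\le\eta R^{3/2}$ and $|\partial_t R|\le\eta R^2$ at any point where the unrescaled scalar curvature exceeds a fixed threshold $r^{-2}$; since $R_{g(t_i)}(y)=R_i\,R_{\tilde g_i(0)}(y)$, any fixed positive lower bound on $R_{\tilde g_i(0)}$ forces $R_{g(t_i)}\to\infty$, so the CNT applies at all rescaled-high-curvature points for $i$ large. Integrating $|\nabla R^{-1/2}|\le\eta/2$ along $\tilde g_i(0)$-geodesics out of $x_i$ and iterating, one obtains $R_{\tilde g_i(0)}\le K(\rho)$ on $B_{\tilde g_i(0)}(x_i,\rho)$ for each $\rho>0$ and all large $i$. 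The companion estimate $|\partial_t R^{-1}|\le\eta$ then extends the bound to a parabolic cylinder $B_{\tilde g_i(0)}(x_i,\rho)\times[-\tau,0]$ for any $\tau>0$, after comparing $\tilde g_i(0)$- and $\tilde g_i(t)$-balls using the Ricci bound one is in the process of establishing. Hamilton--Ivey pinching upgrades the scalar bound to a uniform bound on the full Riemann tensor and forces $\operatorname{Rm}\ge 0$ in the limit.

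Perelman's no local collapsing theorem provides a universal $\kappa>0$ for which $(\mathcal M,g(t))$ is $\kappa$-noncollapsed at some scale $\rho_0>0$; after rescaling, this becomes $\kappa$-noncollapsing at all scales up to $\sqrt{R_i}\,\rho_0\to\infty$. Combined with the parabolic curvature bounds, Hamilton's compactness theorem and a diagonal subsequence argument produce a smooth pointed limit $(\mathcal M_\infty,g_\infty(t),(x_\infty,0))$ on $\mathcal M_\infty\times(-\infty,0]$. Completeness follows from bounded curvature on every ball, nonflatness from $R_{g_\infty}(x_\infty,0)=1$, $\operatorname{Rm}_{g_\infty}\ge 0$ from Hamilton--Ivey, and $\kappa$-noncollapsing at all scales passes through Cheeger--Gromov convergence (the scale ceiling $\sqrt{R_i}\rho_0$ having escaped to infinity). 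The uniform space-time curvature bound $|\operatorname{Rm}_{g_\infty}|\le C$ then follows from the compactness theorem for $3$-dimensional $\kappa$-solutions cited in the introduction, applied to the limit itself.

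\textbf{Main obstacle.} The technical crux is the argument of the second paragraph: converting the pointwise derivative bounds from the CNT into uniform curvature bounds on arbitrarily large parabolic cylinders, in such a way that the constant $K(\rho,\tau)$ depends only on $\rho,\tau$ and the universal constants $\eta,C$ of the CNT and Hamilton--Ivey, and not on the prelimit data. The interplay between the spatial iteration and the backward-in-time extension (which requires comparing balls across different time slices via Ricci) must be carefully organized. Once these bounds are in hand, the remainder is a standard passage-to-the-limit argument using the two compactness theorems named in the introduction.
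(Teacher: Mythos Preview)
Your overall strategy is valid and is the ``hands-on'' route: extract the gradient and time-derivative estimates from the CNT, integrate them to get uniform curvature bounds on parabolic cylinders, apply Hamilton's compactness, and then verify the $\kappa$-solution axioms for the limit. The paper takes a softer and much shorter route that sidesteps your main obstacle entirely. It invokes the CNT not for derivative estimates but in its full form: for each $j$ the rescaled flow $\tilde g_{i_j}$, restricted to $B_{\tilde g_{i_j}(0)}(x_{i_j},j^{1/2})\times[-j,0]$, is $\tfrac{1}{j}$-close to the corresponding piece of some $\kappa$-solution $(\mathcal N_j,h_j,(y_j,0))$. Since $R_{h_j}(y_j,0)\to 1$, Perelman's compactness theorem for $\kappa$-solutions (Theorem~11.7) makes the $(\mathcal N_j,h_j)$ subconverge to a $\kappa$-solution $(\mathcal N_\infty,h_\infty)$; since $\tfrac{1}{j}\to 0$, the $\tilde g_{i_j}$ subconverge to the same object. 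No curvature bounds are established by hand, and the limit is a $\kappa$-solution by construction.

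Two points in your argument need care. First, integrating $|\nabla R^{-1/2}|\le \eta/2$ outward from $x_i$ yields an upper bound on $R_{\tilde g_i(0)}$ only within radius $2/\eta$; beyond that the same inequality gives a \emph{lower} bound on $R$, and ``iterating'' does not propagate the upper bound to arbitrary $\rho$. Bounded curvature at bounded distance for all $\rho$ genuinely requires the full canonical-neighborhood closeness to $\kappa$-solutions, not just the derivative estimates---this is the substance of Perelman's Theorem~12.1 argument, which you would in effect be reproving. Second, your last sentence is circular: the compactness theorem is stated for $\kappa$-solutions, which by definition already have uniformly bounded curvature, so it cannot be invoked to \emph{establish} that $g_\infty$ has bounded curvature. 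The paper's two-step argument avoids both issues, since its limit is produced directly as a limit of $\kappa$-solutions.
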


\begin{proof}
By Perelman's improved no local collapsing theorem \cite{Perelman1} (see also
\cite{Topping}), there exists $\kappa>0$ such that if $x_{0}\in\mathcal{M}$,
$t_{0}\in\lbrack0,T)$, and $r_{0}\in(0,1)$ are such that $R\leq r_{0}^{-2}$ in
$B_{g\left(  t_{0}\right)  }\left(  x_{0},r_{0}\right)  $, then $\frac
{\operatorname{Vol}{}_{g\left(  t_{0}\right)  }B_{g\left(  t_{0}\right)
}\left(  x_{0},r\right)  }{r^{n}}\geq\kappa$ for $r\in(0,r_{0}]$%
.\footnote[2]{Thus noncompact singularity models with $R$ bounded have at
least linear volume growth.} Perelman's canonical neighborhood Theorem 12.1 in
\cite{Perelman1} (see also \cite{Kleiner Lott}) says that for $j\in\mathbb{N}%
$, there exists $r_{j}\in(0,1]$ such that if $i_{j}$ is chosen large enough so
that $R_{i_{j}}\geq r_{j}^{-2}$, then $\tilde{g}_{i_{j}}\left(  t\right)  $ on
$B_{\tilde{g}_{i_{j}}\left(  0\right)  }(x_{i_{j}},j^{1/2})\times\lbrack-j,0]$
is $\frac{1}{j}$-close to the corresponding subset of a $\kappa$-solution
$(\mathcal{N}_{j}^{3},h_{j}\left(  t\right)  )$ centered at $y_{j}%
\in\mathcal{N}_{j}$. From Perelman's compactness Theorem 11.7 in
\cite{Perelman1} for $3$-dimensional $\kappa$-solutions and since
$\lim_{j\rightarrow\infty}R_{h_{j}}\left(  y_{j},0\right)  =\lim
_{j\rightarrow\infty}R_{\tilde{g}_{i_{j}}}\left(  x_{i_{j}},0\right)  =1$, we
have that $(\mathcal{N}_{j},h_{j}\left(  t\right)  ,\left(  y_{j},0\right)  )$
subconverges to a $\kappa$-solution $\left(  \mathcal{N}_{\infty}%
^{3},h_{\infty}\left(  t\right)  ,\left(  y_{\infty},0\right)  \right)  $,
$t\in(-\infty,0]$. Since $\frac{1}{j}\rightarrow0$ as $j\rightarrow\infty$, we
conclude that $\left(  \mathcal{M},\tilde{g}_{i_{j}}\left(  t\right)
,(x_{i_{j}},0)\right)  $ subconverges to $\left(  \mathcal{N}_{\infty
},h_{\infty}\left(  t\right)  ,\left(  y_{\infty},0\right)  \right)  $,
$t\in(-\infty,0]$.
\end{proof}

A singularity model of a singular solution $\left(  \mathcal{M}^{n},g\left(
t\right)  \right)  $, $t\in\lbrack0,T)$, $T<\infty$, on a closed manifold is a
complete nonflat ancient solution which is the limit of $g_{i}\left(
t\right)  \doteqdot K_{i}g\left(  t_{i}+K_{i}^{-1}t\right)  $ for some
$t_{i}\rightarrow T$ and $K_{i}\rightarrow\infty$.

\begin{corollary}
\label{OH SO COOL, SO RAD, I LOVE THIS.}Any $3$-dimensional singularity model
must be a $\kappa$-solution.
\end{corollary}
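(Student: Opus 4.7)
The plan is to reduce this to Corollary \ref{thm any limit large curv exists}. Suppose the $3$-dimensional singularity model $(\mathcal{M}_\infty^3,g_\infty(t),(x_\infty,0))$ arises as the pointed Cheeger--Gromov limit of $(\mathcal{M},g_i(t),(x_i,0))$ with $g_i(t)\doteqdot K_ig(t_i+K_i^{-1}t)$, $t_i\rightarrow T$, $K_i\rightarrow\infty$. The goal is to exhibit $g_\infty$, up to a fixed parabolic rescaling, as the $\kappa$-solution produced by Corollary \ref{thm any limit large curv exists} applied to a judiciously chosen curvature-blowup sequence of spacetime points.

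First I would establish that $\operatorname{Rm}_{g_\infty}\geq 0$ throughout $\mathcal{M}_\infty\times(-\infty,0]$. This follows by applying the Hamilton--Ivey pinching estimate on the original closed $3$-dimensional flow and passing to the limit under the parabolic rescaling by $K_i\rightarrow\infty$, under which any negative curvature eigenvalue becomes negligible. Since $g_\infty$ is nonflat with nonnegative sectional curvature, there must exist a spacetime point $(y_\infty,s_\infty)\in\mathcal{M}_\infty\times(-\infty,0]$ with $R_{g_\infty}(y_\infty,s_\infty)>0$; otherwise $R_{g_\infty}\equiv 0$, which combined with $\operatorname{Rm}\geq 0$ in dimension $3$ forces Ricci-flatness and hence flatness, contradicting the nonflatness assumption.

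Next I would pick $y_i\in\mathcal{M}$ with $y_i\rightarrow y_\infty$ and $\alpha_i\doteqdot R_{g_i}(y_i,s_\infty)\rightarrow\alpha\doteqdot R_{g_\infty}(y_\infty,s_\infty)>0$, using the Cheeger--Gromov convergence $g_i\rightarrow g_\infty$. Setting $\tau_i\doteqdot t_i+K_i^{-1}s_\infty\rightarrow T$, I obtain $R_g(y_i,\tau_i)=K_i\alpha_i\rightarrow\infty$, so Corollary \ref{thm any limit large curv exists} applied to the sequence $(y_i,\tau_i)$ yields a subsequential $\kappa$-solution limit of $\hat{g}_i(t)\doteqdot R_g(y_i,\tau_i)\,g(\tau_i+R_g(y_i,\tau_i)^{-1}t)$. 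A direct algebraic manipulation gives $\hat{g}_i(t)=\alpha_i\,g_i(s_\infty+\alpha_i^{-1}t)$, and since $\alpha_i\rightarrow\alpha$ while $g_i\rightarrow g_\infty$, the flows $\hat{g}_i$ also converge to $\alpha\,g_\infty(s_\infty+\alpha^{-1}t)$. By uniqueness of Cheeger--Gromov limits along the common subsequence, this parabolic rescaling and time shift of $g_\infty$ must itself be a $\kappa$-solution. Since completeness, nonflatness, bounded nonnegative curvature operator, and $\kappa$-noncollapsing at all scales are all preserved under parabolic rescaling and time translation, $g_\infty$ is a $\kappa$-solution.

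The main obstacle is the first step: producing a spacetime point of strictly positive scalar curvature in the limit. This rests on Hamilton--Ivey pinching together with the dimension-$3$ rigidity that nonnegative curvature operator plus vanishing scalar curvature forces flatness. The remainder is the essentially formal observation that replacing the scaling sequence $K_i$ by the comparable sequence $K_i\alpha_i$, with $\alpha_i$ bounded above and bounded away from $0$, changes the class of limiting flows only by a fixed parabolic rescaling, which preserves the $\kappa$-solution property.
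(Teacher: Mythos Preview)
Your proof is correct and follows essentially the same approach as the paper: replace the scaling factors $K_i$ by the comparable curvature-blowup sequence and invoke Corollary~\ref{thm any limit large curv exists}, then observe that the two limits differ only by a fixed parabolic rescaling. The paper streamlines your argument slightly by working directly at the basepoint $(x_\infty,0)$ and citing the strong maximum principle for $R$ to obtain $R_{g_\infty}(x_\infty,0)>0$, which avoids your auxiliary point $(y_\infty,s_\infty)$ and the accompanying time shift.
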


\begin{proof}
Let $\left(  \mathcal{M}_{\infty}^{3},g_{\infty}\left(  t\right)  \right)  $,
$t\in(-\infty,0]$, be a singularity model of a singular solution $\left(
\mathcal{M}^{3},g\left(  t\right)  \right)  $, $t\in\lbrack0,T)$, $T<\infty$,
on a closed $3$-manifold. Then there exist $\left(  x_{i},t_{i}\right)  $ and
$K_{i}\rightarrow\infty$ such that $\left(  \mathcal{M},g_{i}\left(  t\right)
,(x_{i},0)\right)  $ converges to $\left(  \mathcal{M}_{\infty}^{3},g_{\infty
}\left(  t\right)  ,(x_{\infty},0)\right)  $, where $g_{\infty}\left(
t\right)  $ is nonflat. Let $R_{i}\doteqdot R_{g}\left(  x_{i},t_{i}\right)
$. Since $\lim_{i\rightarrow\infty}K_{i}^{-1}R_{i}=R_{g_{\infty}}\left(
x_{\infty},0\right)  \doteqdot c$ exists, where $c>0$ by the strong maximum
principle, we also have that $\tilde{g}_{i}\left(  t\right)  \doteqdot
R_{i}g\left(  t_{i}+R_{i}^{-1}t\right)  $ converges to $\left(  \mathcal{M}%
_{\infty},\tilde{g}_{\infty}\!\left(  t\right)  \!,\!(x_{\infty},0)\right)  $,
$t\in(-\infty,0]$, where $\tilde{g}_{\infty}\left(  t\right)  =cg_{\infty
}\left(  c^{-1}t\right)  $. By Corollary \ref{thm any limit large curv exists}%
, there exists $\left\{  i_{j}\right\}  $ such that $\left(  \mathcal{M}%
,\tilde{g}_{i_{j}}\left(  t\right)  ,(x_{i_{j}},0)\right)  $ converges to a
$\kappa$-solution $\left(  \mathcal{N}_{\infty}^{3},h_{\infty}\left(
t\right)  ,\left(  y_{\infty},0\right)  \right)  $, $t\in(-\infty,0]$. Hence
$\left(  \mathcal{M}_{\infty},\tilde{g}_{\infty}\left(  t\right)  \right)  $
is isometric to $\left(  \mathcal{N}_{\infty},h_{\infty}\left(  t\right)
\right)  $ on $(-\infty,0]$. Thus $g_{\infty}\left(  t\right)  =c^{-1}%
\tilde{g}_{\infty}\left(  ct\right)  $ is a $\kappa$-solution.
\end{proof}

Let $\left(  \mathcal{N}^{3},h\right)  $ be a complete Riemannian
$3$-manifold. Given $\varepsilon>0$ and $p\in\mathcal{N}$ with $R\left(
p\right)  >0$, a geodesic ball $B(p,\varepsilon^{-1}r)$ is called an
$\varepsilon$-neck if $r^{-2}h$ on $B(p,\varepsilon^{-1}r)$ is $\varepsilon
$-close in the $C^{\left\lceil \varepsilon^{-1}\right\rceil +1}$-topology
(here $\left\lceil \varepsilon^{-1}\right\rceil $ denotes the least integer
$\geq\varepsilon^{-1}$) to a piece of the unit cylinder $g_{\operatorname{cyl}%
}=g_{\mathcal{S}^{2}}+du^{2}$ on $\mathcal{S}^{2}\times\mathbb{R}$.

We have the following (which may also be proved using Corollary 9.88 in
\cite{Mogan-Tian}).

\begin{corollary}
\label{Actually is this true? What is the proof? Obvious? Nontrivial? Hard?}%
The asymptotic cone of a noncompact orientable $3$-dimensional $\kappa
$-solution must be either a line or a half-line.
\end{corollary}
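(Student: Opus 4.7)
The plan is to combine Cheeger--Colding's theorem for spaces with $\operatorname{Ric}\geq 0$ with Perelman's canonical neighborhood structure of $3$-dimensional $\kappa$-solutions. Let $(\mathcal{N}^{3},h)$ denote the time-$0$ slice of the $\kappa$-solution and fix a basepoint $p_{0}\in\mathcal{N}$. By Gromov precompactness, applicable since $\operatorname{Rm}\geq 0$, any sequence $\lambda_{k}\to\infty$ admits a subsequence along which $(\mathcal{N},\lambda_{k}^{-2}h,p_{0})$ converges in the pointed Gromov--Hausdorff sense to a limit $(C,d_{C},o)$, which by Cheeger--Colding is a metric cone with vertex $o$. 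The corollary reduces to showing that $\dim_{\mathcal{H}} C = 1$ and that $C$ has at most two rays emanating from $o$.

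For $\dim C = 1$, I would pick any sequence $y_{k}\in\mathcal{N}$ with $d_{h}(y_{k},p_{0})=\lambda_{k}\to\infty$ representing a point at unit distance from $o$ in $C$. By the compactness theorem for $3$-dimensional $\kappa$-solutions (as invoked in Corollary~\ref{thm any limit large curv exists}) together with Perelman's structural result that every noncompact $3$-d $\kappa$-solution of positive sectional curvature admits $\varepsilon$-necks at every point outside a compact set, $y_{k}$ is the center of an $\varepsilon$-neck with cross-sectional diameter of order $R(y_{k})^{-1/2}$. The key quantitative ingredient is the asymptotic scalar curvature ratio estimate $R(y_{k})\cdot d_{h}(y_{k},p_{0})^{2}\to\infty$ for $3$-d $\kappa$-solutions, which follows from $\kappa$-noncollapsing combined with the Li--Yau--Hamilton Harnack inequality. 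Together these force the $\mathcal{S}^{2}$-cross-section at $y_{k}$ to have diameter much smaller than $\lambda_{k}$ and hence to collapse to a single point in $C$ upon rescaling distances by $\lambda_{k}^{-1}$; since this holds at every unit-distance point of $C$, the cross-section of the metric cone $C$ is zero-dimensional.

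For the constraint on the number of rays, I would observe that three or more distinct rays emanating from $o$ would yield three sequences in $\mathcal{N}$ going to infinity whose pairwise distances grow linearly in their distance to $p_{0}$, hence lying in three distinct ends of $\mathcal{N}$; however the Cheeger--Gromoll splitting theorem bounds the number of ends of $\mathcal{N}$ by two, with the two-ended case forcing $\mathcal{N}$ to split isometrically as $\mathcal{S}\times\mathbb{R}$ for a closed nonflat orientable surface $\mathcal{S}$ of nonnegative curvature, so $\mathcal{S}\cong\mathcal{S}^{2}$. Hence $C$ has at most two rays, and so $C$ is either $[0,\infty)$ or $\mathbb{R}$.

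The main obstacle I foresee is the scale-separation estimate $R\cdot d^{2}\to\infty$, which rests on Perelman's canonical neighborhood theorem together with the Hamilton--Perelman curvature estimates for $\kappa$-solutions. Once that is granted, the remainder of the argument assembles from metric-cone geometry, Corollary~\ref{thm any limit large curv exists}, and Cheeger--Gromoll splitting.
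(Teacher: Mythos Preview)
The step ``three sequences with pairwise linearly growing distances lie in three distinct ends'' is false, and this is where your argument breaks. In $\mathbb{R}^{2}$ the three sequences $(k\cos\tfrac{2\pi j}{3},\,k\sin\tfrac{2\pi j}{3})$, $j=0,1,2$, are pairwise separated by $\sqrt{3}\,k$ yet all lie in the unique end. More to the point, in the positive-curvature case of the dichotomy one has $\mathcal{M}\cong\mathbb{R}^{3}$, hence \emph{exactly one} end, so Cheeger--Gromoll gives you nothing beyond what you already know and cannot exclude a cone with two or more rays. A correct way to finish from your one-dimensionality claim is to note that $C$, as a Gromov--Hausdorff limit of rescalings of a manifold with $\operatorname{sect}\geq 0$, is an Alexandrov space of curvature $\geq 0$; one-dimensional such spaces are classified (intervals, half-lines, lines, circles), and the only metric cones among them are $[0,\infty)$ and $\mathbb{R}$. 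To rule out $\mathbb{R}$ in the positive-curvature case you then still need that a line in the asymptotic cone forces a line in $\mathcal{M}$ (via Toponogov monotonicity of comparison angles), whence a splitting, contradicting $\operatorname{Rm}>0$.

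A smaller issue: for the collapsing of cross-sections you need $R(y)\,d(y,p_{0})^{2}\to\infty$ along \emph{every} divergent sequence, not merely $\operatorname{ASCR}=\infty$ (which is a $\limsup$). This does hold, but it follows from Perelman's $\operatorname{AVR}=0$ for $\kappa$-solutions (a neck of radius comparable to $d$ would yield positive asymptotic volume ratio), not from the trace Harnack inequality. The paper bypasses all of this by arguing directly with rays: it shows that in the $\operatorname{Rm}>0$ case every ray from a fixed basepoint $O$ must thread through a common $\varepsilon$-neck cross-section of diameter $\leq 2\pi r_{\varepsilon}$, while each ray has length $\geq \varepsilon^{-1}r_{\varepsilon}$ up to that cross-section, forcing all pairwise comparison angles to vanish as $\varepsilon\to 0$.
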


\begin{proof}
Let $(\mathcal{M}^{3},g\left(  t\right)  )$, $t\in(-\infty,0]$, be a
noncompact orientable $3$-dimensional $\kappa$-solution. Since
$\operatorname{Rm}\geq0$, by the strong maximum principle and Hamilton's
classification of $2$-dimensional $\kappa$-solutions, $(\mathcal{M}%
^{3},g\left(  t\right)  )$ is isometric to:

\begin{enumerate}
\item $\mathcal{S}^{2}\times\mathbb{R}$ or $(\mathcal{S}^{2}\times
\mathbb{R})/\mathbb{Z}_{2}$, where $\mathcal{S}^{2}$ is the shrinking round
$2$-sphere, or

\item a noncompact $\kappa$-solution with $\operatorname{Rm}>0$ and
$\mathcal{M}\cong\mathbb{R}^{3}$.
\end{enumerate}

\noindent In case (i), the asymptotic cone of $(\mathcal{M},g\left(  t\right)
)$ is either a line or a half-line. In case (ii) it suffices to prove the
asymptotic cone of $g\left(  0\right)  $, which exists since
$\operatorname{sect}\geq0$, is a half-line; here $\operatorname{sect}$ denotes
the sectional curvature.\smallskip

\textbf{Claim 1.} \emph{For any }$\varepsilon>0$\emph{ there exists an
}$\varepsilon$\emph{-neck} $\mathcal{N}_{\varepsilon}$ \emph{contained in}
$(\mathcal{M}^{3},g\left(  0\right)  )$.\smallskip

Recall from Proposition 11.4 of \cite{Perelman1} that the asymptotic scalar
curvature ratio $\operatorname{ASCR}(g(t))=\infty$ for $t\in(-\infty,0]$.
Hence, by dimension reduction for noncompact $\kappa$-solutions with
$\operatorname{ASCR}=\infty$, there exists a sequence $\left\{  x_{i}\right\}
$ in $\mathcal{M}$ such that $g_{i}\left(  t\right)  =R_{i}g\left(  R_{i}%
^{-1}t\right)  $, where $R_{i}\doteqdot R_{g}(x_{i},0)$, on $\mathcal{M}%
\times(-\infty,0]$ and based at $(x_{i},0)$, converges to the product of
$\mathbb{R}$ with a $2$-dimensional $\kappa$-solution, which must be the
shrinking round $\mathcal{S}^{2}$. The existence of $\varepsilon$-necks in
$(\mathcal{M},g\left(  t\right)  )$, for any $\varepsilon>0$ and any
$t\in(-\infty,0]$, now follows from the definition of convergence. This proves
Claim 1.\smallskip

Since $\mathcal{N}_{\varepsilon}\cong\mathcal{S}^{2}\times\mathbb{R}$ and
$\partial\mathcal{N}_{\varepsilon}$ is embedded in $\mathcal{M}\cong
\mathbb{R}^{3}$, by the smooth Sch\"{o}nflies theorem we have that
$\mathcal{M}-\mathcal{N}_{\varepsilon}$ has exactly two components, a compact
component $\mathcal{B}_{\varepsilon}$ diffeomorphic to a closed $3$-ball and a
noncompact component $\mathcal{C}_{\varepsilon}\cong\mathcal{S}^{2}%
\times\lbrack0,1)$.

Since $\mathcal{N}_{\varepsilon}$ is an $\varepsilon$-neck, there exist an
embedding $\psi_{\varepsilon}:\mathcal{S}^{2}\times\left[  -\varepsilon
^{-1}+4,\varepsilon^{-1}-4\right]  \rightarrow\mathcal{N}_{\varepsilon}$ and
$r_{\varepsilon}>0$ such that $r_{\varepsilon}^{-2}\psi_{\varepsilon}^{\ast
}g(0)$ is $\varepsilon$-close in the $C^{\left\lceil \varepsilon
^{-1}\right\rceil +1}$-topology to $g_{\operatorname{cyl}}$. We may assume
that $\psi_{\varepsilon}(\mathcal{S}^{2}\times\{-\frac{1}{\varepsilon}+4\})$
is closer to $\partial\mathcal{B}_{\varepsilon}$ and $\psi_{\varepsilon
}(\mathcal{S}^{2}\times\{\frac{1}{\varepsilon}-4\})$ is closer to
$\partial\mathcal{C}_{\varepsilon}$. Since $g\left(  0\right)  $ has bounded
curvature, $r_{\varepsilon}\geq c>0$, independent of $\varepsilon$
small.\smallskip

\textbf{Claim 2.} \emph{From now on, fix} $O\in\mathcal{M}$. \emph{For
}$\varepsilon>0$\emph{ sufficiently small, }$O\in\mathcal{B}_{\varepsilon}%
$.\smallskip

Since $\lim_{\varepsilon\rightarrow0}\max_{x\in\mathcal{N}_{\varepsilon}%
}\left(  \min\left\{  \operatorname{sect}_{g\left(  0\right)  }\left(
P_{x}\right)  :P_{x}\text{ is a }2\text{-plane at }x\right\}  \right)  =0$
($g_{\operatorname{cyl}}$ has a $0$ sectional curvature everywhere) and since
$\operatorname{Rm}_{g\left(  0\right)  }\left(  O\right)  >0$, for
$\varepsilon>0$ small we have that $O\notin\mathcal{N}_{\varepsilon}$. If the
claim is false, then there exist $\varepsilon_{i}\searrow0$ such that
$O\in\mathcal{C}_{\varepsilon_{i}}$ for all $i$. We may pass to a subsequence
$\left\{  k_{i}\right\}  $ with (the $\mathcal{N}_{\varepsilon_{k_{i}}}$ are
pairwise disjoint)%
\begin{equation}
\mathcal{N}_{\varepsilon_{k_{i}}}\subset\mathcal{C}_{\varepsilon_{k_{j}}}%
\quad\text{for }j<i.\label{asymmetric participation}%
\end{equation}
Indeed, suppose we have chosen $1\doteqdot k_{1}<\cdots<k_{i-1}$. Since
$\mathcal{K}_{i}\doteqdot%
{\textstyle\bigcup_{j<i}}
(\overline{\mathcal{M}-\mathcal{C}_{\varepsilon_{k_{j}}}})$ is compact,
$\operatorname{sect}_{g\left(  0\right)  }$ has a positive lower bound on
$\mathcal{K}_{i}$. By this and $\lim_{i\rightarrow\infty}\varepsilon_{i}=0$,
we conclude that there exists $k_{i}>k_{i-1}$ such that $\mathcal{N}%
_{\varepsilon_{k_{i}}}\cap\mathcal{K}_{i}=\varnothing$, which implies
(\ref{asymmetric participation}).

By again using the smooth Sch\"{o}nflies theorem, for each $i$ and $j$ with
$j<i$ we have $\mathcal{C}_{\varepsilon_{k_{j}}}-\mathcal{N}_{\varepsilon
_{k_{i}}}\doteqdot\mathcal{K}_{i,j}\cup\mathcal{L}_{i,j}$, where
$\mathcal{K}_{i,j}\cong\mathcal{S}^{2}\times\lbrack0,1)$ has compact closure
in $\mathcal{M}$ and $\mathcal{L}_{i,j}\cong\mathcal{S}^{2}\times\lbrack0,1)$
satisfies $\overline{\mathcal{L}_{i,j}}=\mathcal{L}_{i,j}$. Since
$\mathcal{M}^{3}\cong\mathbb{R}^{3}$, we conclude $\mathcal{B}_{\varepsilon
_{k_{j}}}\subset\mathcal{B}_{\varepsilon_{k_{i}}}$ and $\mathcal{C}%
_{\varepsilon_{k_{j}}}\supset\mathcal{C}_{\varepsilon_{k_{i}}}$ for all $j<i$.
Claim 2 follows from:\smallskip

\textbf{Subclaim.} $%
{\displaystyle\bigcap\nolimits_{i\in\mathbb{N}}}
\,\mathcal{C}_{\varepsilon_{k_{i}}}=\varnothing$.\smallskip

\noindent Fix $p\in\mathcal{B}_{\varepsilon_{k_{1}}}$, so that $p\in
\mathcal{B}_{\varepsilon_{k_{i}}}$ for all $i$. Suppose the subclaim is false;
then there exists $x\in\,\mathcal{C}_{\varepsilon_{k_{i}}}$ for all $i$. Let
$\gamma$ be a minimal geodesic from $p$ to $x$ with respect to $g\left(
0\right)  $. Then $\gamma$ must pass from one end of $\mathcal{N}%
_{\varepsilon_{k_{i}}}$ to the other end. Hence, for $i$ large,%
\[
d_{g\left(  0\right)  }\left(  p,x\right)  =\operatorname{L}{}_{g\left(
0\right)  }\left(  \gamma\right)  \geq\frac{1}{2}\operatorname{diam}_{g\left(
0\right)  }(\mathcal{N}_{\varepsilon_{k_{i}}})\geq\frac{1}{2}\varepsilon
_{k_{i}}^{-1}r_{\varepsilon_{k_{i}}}\geq\frac{c}{2}\varepsilon_{k_{i}}^{-1},
\]
where $c>0$ is independent of $i$. The subclaim follows from $\varepsilon
_{k_{i}}^{-1}\rightarrow\infty$.\smallskip

Now let $\operatorname{Ray}_{\mathcal{M}}\left(  O\right)  $ denote the space
of unit speed rays emanating from $O$ in $\left(  \mathcal{M}^{3},g\left(
0\right)  \right)  $. We have the pseudo-metric $\tilde{d}_{\infty}\left(
\gamma_{1},\gamma_{2}\right)  \doteqdot\lim_{s,t\rightarrow\infty}%
\tilde{\measuredangle}\gamma_{1}\left(  s\right)  O\,\gamma_{2}\left(
t\right)  \in\left[  0,\pi\right]  $ on $\operatorname{Ray}_{\mathcal{M}%
}\left(  O\right)  $, for $\gamma_{1},\gamma_{2}\in\operatorname{Ray}%
_{\mathcal{M}}\left(  O\right)  $ and where $\tilde{\measuredangle}$ is the
Euclidean comparison angle. The asymptotic cone of $\left(  \mathcal{M}%
,g(0)\right)  $ is isometric to the Euclidean metric cone $\operatorname{Cone}%
\left(  \mathcal{M}\left(  \infty\right)  ,d_{\infty}\right)  $, where
$\left(  \mathcal{M}\left(  \infty\right)  ,d_{\infty}\right)  $ is the
quotient metric space induced by $(\operatorname{Ray}_{\mathcal{M}}\left(
O\right)  ,\tilde{d}_{\infty})$. Thus, the conclusion that the asymptotic cone
of $\left(  \mathcal{M},g(0)\right)  $ is a half-line shall follow from
showing that for all $\gamma_{1},\gamma_{2}\in\operatorname{Ray}_{\mathcal{M}%
}\left(  O\right)  $, $\tilde{d}_{\infty}\left(  \gamma_{1},\gamma_{2}\right)
=0$.

Since $O\in\mathcal{B}_{\varepsilon}$ for $\varepsilon>0$ sufficiently small,
we have for such $\varepsilon$ that any $\gamma\in\operatorname{Ray}%
_{\mathcal{M}}\left(  O\right)  $ passes from one end of $\mathcal{N}%
_{\varepsilon}$ to the other end.\smallskip

\textbf{Claim 3.} \emph{For any }$\varepsilon>0$\emph{ sufficiently small, we
have that any} $\gamma\in\operatorname{Ray}_{\mathcal{M}}\left(  O\right)  $
\emph{intersects }$\psi_{\varepsilon}(\mathcal{S}^{2}\times\{\varepsilon
^{-1}-4\})$ \emph{at exactly one point, which we define to be} $\gamma
(t_{\gamma,\varepsilon}).$\smallskip

This follows from the facts that rays minimize and that the geometry of any
$\varepsilon$-neck is, after rescaling, $\varepsilon$-close to that of the
unit cylinder of length $2\varepsilon^{-1}$.\smallskip

We have $\lim\limits_{t\rightarrow\infty}\frac{d_{g\left(  0\right)  }\left(
\gamma_{1}\left(  at\right)  ,\gamma_{2}\left(  bt\right)  \right)  }%
{t}=\left(  a^{2}+b^{2}-2ab\cos\left(  \tilde{d}_{\infty}\left(  \gamma
_{1},\gamma_{2}\right)  \right)  \right)  ^{1/2}$. Hence $\tilde{d}_{\infty
}\left(  \gamma_{1},\gamma_{2}\right)  =0$ if and only if $\lim
\limits_{t\rightarrow\infty}\frac{d_{g\left(  0\right)  }\left(  \gamma
_{1}\left(  t\right)  ,\gamma_{2}\left(  t\right)  \right)  }{t}=0$. Now
$\tilde{d}_{\infty}\left(  \gamma_{1},\gamma_{2}\right)  =0$ follows
from:\smallskip

\textbf{Claim 4.} \emph{For any} $\gamma_{1},\gamma_{2}\in\operatorname{Ray}%
_{\mathcal{M}}\left(  O\right)  $, $\lim_{\varepsilon\rightarrow0}%
\frac{d_{g\left(  0\right)  }\left(  \gamma_{1}\left(  t_{\gamma
_{1},\varepsilon}\right)  ,\gamma_{2}\left(  t_{\gamma_{2},\varepsilon
}\right)  \right)  }{\min\{t_{\gamma_{1},\varepsilon},t_{\gamma_{2}%
,\varepsilon}\}}=0$.

Since $\mathcal{N}_{\varepsilon}$ is an $\varepsilon$-neck and the diameter of
a round $2$-sphere of radius $r$ is $\pi r$, we have that $d_{g\left(
0\right)  }\left(  \gamma_{1}\left(  t_{\gamma_{1},\varepsilon}\right)
,\gamma_{2}\left(  t_{\gamma_{2},\varepsilon}\right)  \right)  \leq2\pi
r_{\varepsilon}$ for $\varepsilon$ sufficiently small. Since $\gamma_{1}$ and
$\gamma_{2}$ are rays emanating from the same point, this implies $\left\vert
t_{\gamma_{1},\varepsilon}-t_{\gamma_{2},\varepsilon}\right\vert \leq2\pi
r_{\varepsilon}$. Since $\left.  \gamma_{1}\right\vert _{[0,t_{\gamma
_{1},\varepsilon}]}$ and $\left.  \gamma_{2}\right\vert _{[0,t_{\gamma
_{2},\varepsilon}]}$ both intersect $\psi_{\varepsilon}(\mathcal{S}^{2}%
\times\{-\varepsilon^{-1}\})$ and $\psi_{\varepsilon}(\mathcal{S}^{2}%
\times\{\varepsilon^{-1}-4\})$, we have $\min\{t_{\gamma_{1},\varepsilon
},t_{\gamma_{2},\varepsilon}\}\geq\varepsilon^{-1}r_{\varepsilon}$. Claim 4
follows easily.
\end{proof}

\markright{CONSEQUENCES OF PERELMAN'S CANONICAL NEIGHBORHOOD
THEOREM}

\end{document}